\newcommand{\diag}{\operatorname{diag}}
\begin{document}
	
	\newtheorem{theorem}{Theorem}
	\newtheorem{lemma}[theorem]{Lemma}
	\newtheorem{claim}[theorem]{Claim}
	\newtheorem{cor}[theorem]{Corollary}
	\newtheorem{prop}[theorem]{Proposition}
	\newtheorem{example}[theorem]{Example}
	\newtheorem{definition}{Definition}
	\newtheorem{quest}[theorem]{Question}
	\newcommand{\hh}{{{\mathrm h}}}
	
	\numberwithin{equation}{section}
	\numberwithin{theorem}{section}
	\numberwithin{table}{section}
	
	\def\sssum{\mathop{\sum\!\sum\!\sum}}
	\def\ssum{\mathop{\sum\ldots \sum}}
	\def\iint{\mathop{\int\ldots \int}}

	\def\squareforqed{\hbox{\rlap{$\sqcap$}$\sqcup$}}
	\def\qed{\ifmmode\squareforqed\else{\unskip\nobreak\hfil
			\penalty50\hskip1em\null\nobreak\hfil\squareforqed
			\parfillskip=0pt\finalhyphendemerits=0\endgraf}\fi}%%
	
	%  use the AMS-Euler Fraktur fonts
	%%%%%%%%%%%%%%%%%%%%%%%%%%%%%%%%%%
	\newfont{\teneufm}{eufm10}
	\newfont{\seveneufm}{eufm7}
	\newfont{\fiveeufm}{eufm5}
	%%%%%%%%%%%%%%%%%%%%%%%%%%%%%%%%%
	%
	%  allow automatic size selection in math mode
	%
	%%%%%%%%%%%%%%%%%%%%%%%%%%%%%%%%%
	\newfam\eufmfam
	\textfont\eufmfam=\teneufm
	\scriptfont\eufmfam=\seveneufm
	\scriptscriptfont\eufmfam=\fiveeufm
	%%%%%%%%%%%%%%%%%%%%%%%%%%%%%%%%%
	%
	%  \frak works on a single symbol at a time...
	%
	\def\frak#1{{\fam\eufmfam\relax#1}}
	% 

	%\numberwithin{equation}{section}
	%\numberwithin{theorem}{section}
	%\numberwithin{table}{section}
	
	\newcommand{\bflambda}{{\boldsymbol{\lambda}}}
	\newcommand{\bfmu}{{\boldsymbol{\mu}}}
	\newcommand{\bfxi}{{\boldsymbol{\xi}}}
	\newcommand{\bfrho}{{\boldsymbol{\rho}}}
	
	\def\fK{\mathfrak K}
	\def\fT{\mathfrak{T}}

	\def\fA{{\mathfrak A}}
	\def\fB{{\mathfrak B}}
	\def\fC{{\mathfrak C}}

	\def\vec#1{\mathbf{#1}}

	\def\squareforqed{\hbox{\rlap{$\sqcap$}$\sqcup$}}
	\def\qed{\ifmmode\squareforqed\else{\unskip\nobreak\hfil
			\penalty50\hskip1em\null\nobreak\hfil\squareforqed
			\parfillskip=0pt\finalhyphendemerits=0\endgraf}\fi}

	%%%%%%%%%%%%%%%%%%%%%%%%%
	% Alphabet calligraphie %
	%%%%%%%%%%%%%%%%%%%%%%%%%
	\def\cA{{\mathcal A}}
	\def\cB{{\mathcal B}}
	\def\cC{{\mathcal C}}
	\def\cD{{\mathcal D}}
	\def\cE{{\mathcal E}}
	\def\cF{{\mathcal F}}
	\def\cG{{\mathcal G}}
	\def\cH{{\mathcal H}}
	\def\cI{{\mathcal I}}
	\def\cJ{{\mathcal J}}
	\def\cK{{\mathcal K}}
	\def\cL{{\mathcal L}}
	\def\cM{{\mathcal M}}
	\def\cN{{\mathcal N}}
	\def\cO{{\mathcal O}}
	\def\cP{{\mathcal P}}
	\def\cQ{{\mathcal Q}}
	\def\cR{{\mathcal R}}
	\def\cS{{\mathcal S}}
	\def\cT{{\mathcal T}}
	\def\cU{{\mathcal U}}
	\def\cV{{\mathcal V}}
	\def\cW{{\mathcal W}}
	\def\cX{{\mathcal X}}
	\def\cY{{\mathcal Y}}
	\def\cZ{{\mathcal Z}}
	\newcommand{\rmod}[1]{\: \mbox{mod} \: #1}

	\def\vr{\mathbf r}
	
	\def\e{{\mathbf{\,e}}}
	\def\ep{{\mathbf{\,e}}_p}
	\def\em{{\mathbf{\,e}}_m}
	
	\def\Tr{{\mathrm{Tr}}}
	\def\Nm{{\mathrm{Nm}}}
	\def\rM{{\mathrm{M}}}
	
	\def\SL{{\mathrm{SL}}}
	
	\def\SS{{\mathbf{S}}}
	
	\def\lcm{{\mathrm{lcm}}}

	\def\({\left(}
	\def\){\right)}
	\def\fl#1{\left\lfloor#1\right\rfloor}
	\def\rf#1{\left\lceil#1\right\rceil}
	
	\def\eps{\varepsilon}
	\def\al{\alpha}
	\def\be{\beta}
	\def\N{\mathbb{N}}
	\def\L{\mathbb{L}}
	
	\def\mand{\qquad \mbox{and} \qquad}
	\def\mor{\qquad \mbox{or} \qquad}
	
	\def\ccr#1{\textcolor{red}{#1}}
	\def\cco#1{\textcolor{orange}{#1}}
	\def\ccc#1{\textcolor{cyan}{#1}}

	\newcommand{\commA}[2][]{\todo[#1,color=green!60]{A: #2}}
	\newcommand{\commI}[2][]{\todo[#1,color=yellow]{I: #2}}
	\newcommand{\commM}[2][]{\todo[#1,color=red]{M: #2}}
	%
	%\newcommand{\commA}[1]{\marginpar{%
	%\begin{color}{red}
	%\vskip-\baselineskip %raise the marginpar a bit
	%\raggedright\footnotesize
	%\itshape\hrule \smallskip A: #1\par\smallskip\hrule\end{color}}}
	%
	%\newcommand{\commI}[1]{\marginpar{%
	%\begin{color}{magenta}
	%\vskip-\baselineskip %raise the marginpar a bit
	%\raggedright\footnotesize
	%\itshape\hrule \smallskip I: #1\par\smallskip\hrule\end{color}}}
	%
	%\newcommand{\commM}[1]{\marginpar{%
	%\begin{color}{blue}
	%\vskip-\baselineskip %raise the marginpar a bit
	%\raggedright\footnotesize
	%\itshape\hrule \smallskip M: #1\par\smallskip\hrule\end{color}}}
	%
	%%%%%%%%%%%%%%%%%%%%%%%%%%%%%%%%%%%%%%%%%%%%%%%%%%%%%%%%
	%%%%%%%%%%%%%%%%%%%%%%%%%%%%%%%%%%%%%%%%%%%%%%%%%%%%%%%%
	%%%%%%%%%%%%%%%%%%%%%%%%%%%%%%%%%%%%%%%%%%%%%%%%%%%%%%%%
	%%%%%%%%%%%%%%%%%%%%%%%%%%%%%%%%%%%%%%%%%%%%%%%%%%%%%%%%

	%%%%%%%  END OF STANDARD STUFF %%%%%%%%%

	%%%%%%%%%%%%%%%%%%%%%%%%%%%%%%%%%%%%%%%%%%%%%%%%%%%%%%%%
	%%%%%%%%%%%%%%%%%%%%%%%%%%%%%%%%%%%%%%%%%%%%%%%%%%%%%%%%
	%%%%%%%%%%%%%%%%%%%%%%%%%%%%%%%%%%%%%%%%%%%%%%%%%%%%%%%%
	%%%%%%%%%%%%%%%%%%%%%%%%%%%%%%%%%%%%%%%%%%%%%%%%%%%%%%%
	%%%%%%%%%%%
	%%% Spell

	\hyphenation{re-pub-lished}

	\parskip 4pt plus 2pt minus 2pt

	\mathsurround=1pt

	\def\bfdefault{b}
	\overfullrule=5pt

	\def \F{{\mathbb F}}
	\def \K{{\mathbb K}}
	\def \Z{{\mathbb Z}}
	\def \Q{{\mathbb Q}}
	\def \R{{\mathbb R}}
	\def \C{{\mathbb C}}
	\def\Fp{\F_p}
	\def \fp{\Fp^*}
	
	\def\Kmn{\cK_p(m,n)}
	\def\psmn{\psi_p(m,n)}
	\def\AI{\cA_p(\cI)}
	\def\BIJ{\cB_p(\cI,\cJ)}
	\def \xbar{\overline x_p}

	\title[Euclidean Minima  of Algebraic Number Fields]{Euclidean Minima  of Algebraic Number Fields}

	\author{Art\= uras Dubickas}
	\address{Institute of Mathematics, Faculty of Mathematics and Informatics, Vilnius University, Naugarduko 24,
		LT-03225 Vilnius, Lithuania}
	\email{arturas.dubickas@mif.vu.lt}

	\author{Min Sha}
	\address{School of Mathematical Sciences, South China Normal University, Guangzhou, 510631, China}
	\email{min.sha@m.scnu.edu.cn}

	\author{Igor E. Shparlinski}
	\address{School of Mathematics and Statistics, University of New South Wales,
		Sydney, NSW 2052, Australia}
	\email{igor.shparlinski@unsw.edu.au}

	\begin{abstract} 
		In this paper, we use  some of our previous results to improve an upper bound of 
		Bayer-Fluckiger, Borello and Jossen on the Euclidean minima  of algebraic number fields. 
		Our bound depends on the degree $n$ of the field, its signature, discriminant and the Hermite constant in dimension $n$. 
		
	\end{abstract}
	
	\keywords{Number field, Euclidean minimum, lattice, Hermite constant}
	\subjclass[2010]{11H06, 11H46, 11R04}

	\maketitle

	\section{Introduction}
	
	Let $\K$ be a number field
	of degree $n$ over the field of rational numbers $\Q$, and let 
	$\Z_{\K}$ be the ring of integers of $\K$. We use $\Nm_{\K/\Q}$ to denote the {\it norm map\/}
	$\Nm_{\K/\Q}: \K \to \Q$ (which maps $\gamma \in \K$ to  $\prod_{i=1}^n |\sigma_i(\gamma)|$, where $\sigma_1,\dots,\sigma_n$ are all $n$ embeddings of $\K$ into $\C$), and define the parameter 
	$$
	\rM(\K) = \sup_{\alpha\in \K} \inf_{\beta \in \Z_\K} |\Nm_{\K/\Q}(\alpha-\beta)|.
	$$
	The latter is known as the {\it Euclidean minimum\/} of $\K$ and 
	is  a classical  object of study; we refer to~\cite{B-F,B-FBJ,B-FM1,B-FM,el,Lez,mcg,ShWa} for an overview of some previous results on $\rM(\K)$.
	In particular,  as we note in Section~\ref{sec:Latt}, 
	$\rM(\K)$ is an inhomogeneous minimum in a lattice which is naturally associated with 
	the field $\K$. 
	
By an ingenious application of an idea of McMullen~\cite{McM}, 
	Bayer-Fluckiger, Borello and Jossen~\cite{B-FBJ} have obtained a new parametric bound on the Euclidean minimum 
	$\rM(\K)$.\footnote{See the latest 2023 version of this paper at {\tt arXiv:1511.00908v3}, where some corrections in the proof of~\cite[Theorem~5.3]{B-FBJ} have been made.}
	To present their result we need to introduce some notation. 
	
	We say that the field $\K$ of degree $n$ over $\mathbb Q$ is of {\it signature\/} $(r,s)$ 
	if it has $r$ real embeddings  and $s$   pairs of complex conjugate embeddings, so that 
	\begin{equation}\label{nrs}
		n=[\K:\Q]=r+2s.
	\end{equation}
	We also denote by $D_{\K}$ the absolute value of the  discriminant of the field $\K$.
	Finally, let $\gamma_n$ be the  {\it Hermite constant\/} in dimension $n$; 
	see Section~\ref{sec:Latt} for a precise definition. 
	%see~\cite[Chapter~1, Section~1.5]{ConSlo} or~\cite[Section~2.2]{Mart}.
	
	Then, by~\cite[Theorem~5.3]{B-FBJ}, for any positive  integer $a \le r+s$ 
	one has
	$$
	\rM(\K) \le 2^{\frac{-s(s+a)}{a}}\big(2^{s-a} \gamma_n^{s+a} n^{-s}\big)^{\frac{n}{2a}} D_{\K}^{\frac{s+a}{2a}},
	$$
	which, by~\eqref{nrs}, can be equivalently written as 
	\begin{equation}\label{bflu000}
		\rM(\K)\le 2^{-n + \frac{ar + rs}{2a}}n^{\frac{-ns}{2a}} \gamma_n^{\frac{n(s+a)}{2a}} D_{\K}^{\frac{s+a}{2a}}.
	\end{equation}
	%We remark that the pair of parameters $(s,t)$  in our notation corresponds to $(r,s)$ in the notation of~\cite{B-FBJ}.
	
	In this paper, we use some results and ideas from~\cite{DSS} (see also two subsequent papers~\cite{AD1, AD2}
	and well as its predecessors~\cite{RoTs,Tsfas}) to  improve~\eqref{bflu000}
	by the factor $2^{\frac{ar+rs}{2a}}$. 
	
	\begin{theorem}
		\label{thm: MKa}
		For any positive  integer $a \le r+s$ 
		we have
		$$
		\rM(\K)\le  2^{-n} n^{\frac{-ns}{2a}} \gamma_n^{\frac{n(s+a)}{2a}} D_{\K}^{\frac{s+a}{2a}}.
		$$
	\end{theorem}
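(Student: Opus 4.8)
The plan is to realise $\rM(\K)$ as the inhomogeneous minimum of the norm form attached to the canonical embedding of $\Z_\K$, and then to bound it by combining an averaging over the norm-preserving torus, in the spirit of McMullen~\cite{McM} and~\cite{B-FBJ}, with the sharp geometry-of-numbers estimates of~\cite{DSS}. First I would fix the canonical embedding $\sigma\colon\K\hookrightarrow\R^r\times\C^s\cong\R^n$, rescaling the $s$ complex blocks by $\sqrt2$ so that the lattice $\Lambda=\sigma(\Z_\K)$ has covolume $\sqrt{D_\K}$, and extend the norm to a form $N$ on $\R^n$ for which
\[
\inf_{\beta\in\Z_\K}\bigl|\Nm_{\K/\Q}(\alpha-\beta)\bigr|=\inf_{\ell\in\Lambda}\bigl|N(\sigma(\alpha)-\ell)\bigr|,
\]
as recalled in Section~\ref{sec:Latt}. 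The point of this rescaling is that the arithmetic--geometric mean inequality is then applied without loss, giving $|N(w)|\le(\|w\|^2/n)^{n/2}$ for the resulting Euclidean norm, so that $\rM(\K)\le(\mu^2/n)^{n/2}$ where $\mu$ is a covering radius still to be estimated.

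Next, following McMullen's idea as used in~\cite{B-FBJ}, I would exploit that both $N$ and the covolume are invariant under the diagonal torus $T$ of norm-one elements, inside which the units $\Z_\K^{\times}$ sit as a cocompact lattice of rank $r+s-1$. Since $\inf_{\ell}|N(v-\ell)|=\inf_{\ell}|N(tv-t\ell)|$ for every $t\in T$, one may replace $\Lambda$ by its most favourable translate $t\Lambda$, which has the same covolume $\sqrt{D_\K}$, and reduce to bounding $\inf_{t\in T}\mu(t\Lambda)$, the infimum being effectively over the compact quotient $T/\Z_\K^{\times}$.

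The crux is this covering-radius estimate, and it is here that the input of~\cite{DSS} enters. Passing to a flag of sublattices indexed by the integer parameter $a$ and applying Hermite's inequality in dimension $n$ to the relevant successive minima should produce the $\gamma_n$, $n^{-s}$ and $a$-dependent factors; the new ingredient is that the $r$ real directions are treated by the sharp Minkowski-type product estimate rather than by the cruder covering bound implicit in~\cite{B-FBJ}. Quantitatively, the saved factor $2^{(ar+rs)/(2a)}=(2^r)^{(s+a)/(2a)}$ is exactly what one gains as if $D_{\K}$ were replaced by $D_{\K}/2^{r}$ in the discriminant factor, so the improvement is carried entirely by the real places: it disappears when $r=0$ (where~\eqref{bflu000} already coincides with Theorem~\ref{thm: MKa}), and at $\K=\Q$ it turns the bound $2^{-1/2}$ of~\eqref{bflu000} into the exact value $\rM(\Q)=2^{-1}$.

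I expect the main obstacle to be precisely this last step: securing the covering-radius estimate with the optimal leading constant uniformly over the torus, and checking that the $a$-parametrised interplay between Hermite's constant and the sharp factor $2^{-n}$ reproduces the claimed exponents after substitution into $(\mu^2/n)^{n/2}$. Once the sharp estimate of~\cite{DSS} is in place, collecting the powers of $2$, $n$, $\gamma_n$ and $D_{\K}$ is a routine computation that should yield exactly $2^{-n}n^{-ns/(2a)}\gamma_n^{n(s+a)/(2a)}D_{\K}^{(s+a)/(2a)}$, as in Theorem~\ref{thm: MKa}.
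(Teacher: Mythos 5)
Your overall strategy coincides with the paper's: embed $\Z_\K$ as a lattice, use the sharp arithmetic--geometric mean estimate from~\cite{DSS} (your $\sqrt2$-rescaling of the complex blocks is an exactly equivalent repackaging of Lemma~\ref{lem:vecx}, and your diagnosis that the gain $2^{r(s+a)/(2a)}$ over~\eqref{bflu000} lives entirely at the real places is correct), and invoke McMullen's torus idea as implemented in~\cite{B-FBJ}. However, there is a genuine gap at exactly the point you flag as the crux. The reduction is \emph{not} to bounding $\inf_{t\in T}\mu(t\Lambda)$, the covering radius over the torus: no such bound with the required constant is available, and it is not what the McMullen mechanism supplies. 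What~\cite[Theorem~4.3]{B-FBJ} gives (using compactness of the orbit $G_{r+s}\Lambda_\K$, which must be quoted) is an element $g$ of the torus equalizing the \emph{last} $n-s$ successive minima, $\mu_{s+1}(g\Lambda)=\cdots=\mu_n(g\Lambda)$. One then applies Minkowski's theorem (Lemma~\ref{lem:Mink}) with $k=s+a$ --- this choice of $k$, not a ``flag of sublattices'', is how the parameter $a$ enters --- and splits
$$
\mu_1(g\Lambda)\cdots\mu_{s+a}(g\Lambda)=\mu_1(g\Lambda)\cdots\mu_s(g\Lambda)\,\mu_n(g\Lambda)^a \ \ge\ \mu_1(g\Lambda)^s\,\mu_n(g\Lambda)^a.
$$

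Second, and decisively, your sketch never uses the arithmetic input $m_s(\Lambda_\K)=1$, i.e.\ that $|\Nm_{\K/\Q}(\beta)|\ge 1$ for nonzero $\beta\in\Z_\K$. This is what lower-bounds $\mu_1(g\Lambda)\ge 2^{-s/n}n^{1/2}$ via Lemma~\ref{lem:m and mu1}, and it is the sole source of the factor $n^{-ns/(2a)}$ and of the discriminant exponent $\frac{s+a}{2a}$ once $\mu_n(g\Lambda)\ge 2^{1-s/n}M_s(\Lambda)^{1/n}$ (Lemma~\ref{lem:M and mun}) and $\det(\Lambda_\K)=2^{-s}D_\K^{1/2}$ from~\eqref{eq:disc det} are inserted. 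Without this input the estimate you are after is simply false for general lattices: the correct general statement is the trade-off $m_s(\Lambda)^s M_s(\Lambda)^a\le 2^{s^2+as-an}n^{-ns/2}\gamma_n^{n(s+a)/2}\det(\Lambda)^{s+a}$ of Lemma~\ref{lem:m and M}, in which a small homogeneous minimum permits a large inhomogeneous one. Consequently ``securing the covering-radius estimate with the optimal leading constant uniformly over the torus'' cannot succeed as stated; the term $\mu_1^s$ must be carried through the whole argument and only eliminated at the very end by the arithmetic normalization $m_s(\Lambda_\K)=1$ specific to $\Lambda_\K$.
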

	
	In particular, for fields with large discriminant $D_{\K}$,  
	when the choice $a = r + s$ is optimal, 
	this leads to the following upper bound:
	
	\begin{cor}
		\label{cor: MK}
		We have
		$$
		\rM(\K) \le  2^{-n} n^{\frac{-ns}{2(r+s)}} \gamma_n^{\frac{n^2}{2(r+s)}} D_{\K}^{\frac{n}{2(r+s)}}.
		$$
	\end{cor}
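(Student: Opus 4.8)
The plan is to simply specialize Theorem~\ref{thm: MKa} to the endpoint value $a = r+s$; no independent argument is needed, since the corollary is nothing more than this parametric bound evaluated at one admissible value of $a$. First I would verify admissibility of the choice: for any number field we have $n \ge 1$, and~\eqref{nrs} gives $n = r+2s \le 2(r+s)$, whence $r+s \ge n/2 \ge 1$. Thus $a = r+s$ is a positive integer satisfying $a \le r+s$, so Theorem~\ref{thm: MKa} applies with this value.

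Next I would carry out the substitution, the only real content being the bookkeeping in the exponents. The key simplification again comes from~\eqref{nrs}: taking $a = r+s$ we have
$$
s + a = s + (r+s) = r + 2s = n.
$$
Consequently the three exponents occurring in Theorem~\ref{thm: MKa} collapse to
$$
\frac{-ns}{2a} = \frac{-ns}{2(r+s)}, \qquad \frac{n(s+a)}{2a} = \frac{n \cdot n}{2(r+s)} = \frac{n^2}{2(r+s)}, \qquad \frac{s+a}{2a} = \frac{n}{2(r+s)}.
$$
Inserting these into the inequality of Theorem~\ref{thm: MKa}, while leaving the prefactor $2^{-n}$ untouched, reproduces verbatim the asserted bound for $\rM(\K)$.

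There is no genuine obstacle here, and in particular the phrase ``$a = r+s$ is optimal'' in the surrounding text is merely a heuristic remark about which value of $a$ minimizes the bound for large $D_{\K}$; it is not part of what must be proved. The single fact worth recording is the identity $s + a = n$, which is precisely what turns the exponent $n(s+a)/(2a)$ of $\gamma_n$ into the clean expression $n^2 / (2(r+s))$ and the exponent $(s+a)/(2a)$ of $D_{\K}$ into $n / (2(r+s))$.
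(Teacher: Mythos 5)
Your proposal is correct and coincides with the paper's (implicit) derivation of Corollary~\ref{cor: MK}: one simply sets $a = r+s$ in Theorem~\ref{thm: MKa} and uses $s + a = r + 2s = n$ from~\eqref{nrs} to simplify the exponents. Your added check that $a = r+s$ is an admissible positive integer is a harmless extra detail the paper leaves unstated.
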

	
	Theorem~\ref{thm: MKa} and Corollary~\ref{cor: MK} can now be combined with some recent  upper bounds and explicit values 
	on the Hermite constants to yield more explicit bounds for the Euclidean minimum $\rM(\K)$. 
	For example, for any integer $n \ge 1$, we have
	$$
	\gamma_n < \frac{n}{8} + \frac{6}{5}
	$$ 
	by~\cite[Theorem~1]{Wen1}, 
	and  
	$$
	\gamma_n < \frac{2n}{17} + 2
	$$ 
	by~\cite[Theorem~1]{Wen2}.
	%see~\cite[Chapter~1, Section~1.5]{ConSlo} or~\cite[Sections~2.2, 2.7 and~14.4]{Mart}. 
	
	In particular, for small  $n$, in Section~\ref{sec:calc}
	we present some numerical improvements of the results from the table of~\cite[Section~5.4]{B-FBJ}
	(these bounds are all based on the precise knowledge of $\gamma_n$ for small $n$).  
	In Section~\ref{sec:calc} we also confirm that the bound in Theorem~\ref{thm: MKa} is better than 
	a general bound for $\rM(\K)$ in~\cite{B-F} for small $n$. 
	
	In Section~2, we shall present all results
	necessary for the proof of Theorem~\ref{thm: MKa}, and then complete the proof in Section~3.

	\section{Homogeneous and inhomogeneous minima in lattices}
	\label{sec:Latt}
	
	We first recall some standard notions and results related to lattices from algebraic number fields. 
	For a number field $\K$  of degree $n$ and of signature $(r,s)$, 
	we assume that its 
	real embeddings are $\sigma_1,\ldots,\sigma_r$ (if $r\ge 1$) and its complex embeddings are $\tau_1,\ldots,\tau_s, \overline{\tau_1},\dots,\overline{\tau_s}$ (if $s \ge 1$) 
	which naturally yields the vector embedding 
	$$
	{\psi \, \colon \, \K
		\hookrightarrow \R^{r} \times \C^{s} \cong \R^{n}},
	$$ 
	where 
	$$\psi(\alpha)=(\sigma_1(\alpha),\dots,\sigma_r(\alpha),\Re(\tau_1(\alpha)),\Im(\tau_1(\alpha)),\dots,\Re(\tau_s(\alpha)),\Im(\tau_s(\alpha))).$$
	Hence, we can consider the  full rank $n$-dimensional lattice $\Lambda _{\K} =\psi (\Z_{\K})$, 
	see~\cite[Chapter~8, Section~7]{ConSlo} or~\cite[Chapter~V]{Lang}, where also many 
	applications of this construction are given.

	It  is well known that  the determinant (that is, the invariant of a lattice defined as
	the volume of the parallelepiped formed by any basis of this lattice)   of  $\Lambda _{\K}$
	is given by 
	\begin{equation}
		\label{eq:disc det}
		\det (\Lambda _{\K}) = 2^{-s}D_{\K}^{1/2}
	\end{equation}
	(see~\cite[Chapter~V, Section~2, Lemma~2]{Lang}). 
	
	We also need some facts and notions from the theory of geometric lattices.
	Recall that a subset of $\R^n$ is called a lattice
	if it is a discrete subgroup of $\R^n$.
	In the sequel, let $\Lambda $ be a lattice of $\R^n$.
	
	Recall that the \textit{length} of a vector $\vec{x}=(x_1, \ldots, x_n) \in \R^n$ is defined by 
	$$
	\| \vec{x} \| = \left(x_1^2 + \cdots + x_n^2 \right)^{1/2}. 
	$$
	
	Next, we need the standard notion of the  \textit{successive minima} of a lattice $\Lambda$ in $\R^n$ (see~\cite{ConSlo, Mart}).
	For $k =1, \ldots, n$, we recall that $\mu_k(\Lambda)$ is the smallest real number $\mu$
	such that $\Lambda$  contains $k$ linearly independent vectors of length at most $\mu$. 
	
	We denote by $\det(\Lambda)$ the determinant of $\Lambda$, 
	which is the volume of the parallelepiped formed by any basis of $\Lambda$.
	
	Recall that the \textit{Hermite constant} for dimension $n$ is 
	$$
	\gamma_n := \sup_{\Lambda}\mu_1(\Lambda)^2 \det(\Lambda)^{-2/n},
	$$
	where $\Lambda$ runs over all lattices in $\R^n$.
	
	We remark that the successive minima defined in~\cite[Definition~2.6.7]{Mart} are the squares of the ones defined above, and  the determinant $\det(\Lambda)$ defined in~\cite[Definition~1.2.4]{Mart} is the square of the one defined here. 
	
	Now, we can restate Minkowski's theorem as follows (see~\cite[Theorem~2.6.8]{Mart}). 
	
	\begin{lemma}    \label{lem:Mink}
		For any lattice $\Lambda \in \R^n$ and any positive integer $k \le n$, we have 
		$$
		\mu_1(\Lambda) \cdots \mu_k(\Lambda) \le \gamma_n^{k/2} \det(\Lambda)^{k/n}. 
		$$
	\end{lemma}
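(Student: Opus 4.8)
The plan is to isolate the two extreme cases and interpolate between them. For $k=1$ the claimed inequality reads $\mu_1(\Lambda)\le\gamma_n^{1/2}\det(\Lambda)^{1/n}$, which is precisely the definition of the Hermite constant $\gamma_n$ and so requires no argument. The opposite extreme $k=n$, that is
\[
\mu_1(\Lambda)\cdots\mu_n(\Lambda)\le\gamma_n^{n/2}\det(\Lambda),
\]
is the substantive statement; I claim that every intermediate value of $k$ follows from it by an elementary comparison of geometric means.

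First I would reduce the general case to $k=n$. The successive minima satisfy $\mu_1(\Lambda)\le\cdots\le\mu_n(\Lambda)$ by definition, so the running geometric means $G_m=(\mu_1(\Lambda)\cdots\mu_m(\Lambda))^{1/m}$ form a non-decreasing sequence: the inequality $G_{m+1}\ge G_m$ is equivalent to $\mu_{m+1}(\Lambda)\ge G_m$, which holds because $\mu_{m+1}(\Lambda)$ is at least as large as each of the terms whose geometric mean is $G_m$. In particular $G_k\le G_n$, i.e.
\[
\mu_1(\Lambda)\cdots\mu_k(\Lambda)\le\bigl(\mu_1(\Lambda)\cdots\mu_n(\Lambda)\bigr)^{k/n}.
\]
Feeding the $k=n$ bound into the right-hand side gives $\mu_1(\Lambda)\cdots\mu_k(\Lambda)\le(\gamma_n^{n/2}\det(\Lambda))^{k/n}=\gamma_n^{k/2}\det(\Lambda)^{k/n}$, which is the assertion.

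The heart of the matter, and the step I expect to be the main obstacle, is thus the case $k=n$. The natural strategy is to apply the $k=1$ bound not to $\Lambda$ but to a determinant-preserving distortion of it that equalizes the minima: writing $v_1,\dots,v_n$ for vectors realizing $\mu_1(\Lambda),\dots,\mu_n(\Lambda)$ and $\bar\mu=(\mu_1(\Lambda)\cdots\mu_n(\Lambda))^{1/n}$, one takes the map $g$ that is diagonal in the Gram--Schmidt frame of $v_1,\dots,v_n$ and scales the $i$-th direction by $\bar\mu/\mu_i(\Lambda)$, so that $\det g=1$. Should one know that $\mu_1(g\Lambda)\ge\bar\mu$, then the definition of $\gamma_n$ applied to $g\Lambda$ yields $\bar\mu\le\mu_1(g\Lambda)\le\gamma_n^{1/2}\det(\Lambda)^{1/n}$, and raising to the $n$-th power finishes the proof. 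The obstacle is exactly the inequality $\mu_1(g\Lambda)\ge\bar\mu$: it is straightforward to verify for the vectors $v_i$ themselves, but a priori some other lattice vector lying along the compressed directions could be shortened below $\bar\mu$. Controlling all nonzero lattice vectors simultaneously is the genuine content of Minkowski's second theorem in its Hermite form, and at this point I would invoke the packing/disjoint-bodies argument recorded in \cite[Theorem~2.6.8]{Mart}. I would stress that one cannot bypass this by applying Minkowski's convex-body theorem to the Euclidean unit ball $B_n$, since that route only produces the weaker constant $2^n/\mathrm{vol}(B_n)$, which satisfies $2^n/\mathrm{vol}(B_n)\ge\gamma_n^{n/2}$ and is therefore insufficient.
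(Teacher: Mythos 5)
Your proposal is correct and ultimately rests on the same source as the paper: the paper gives no proof of this lemma at all, presenting it as a direct restatement of \cite[Theorem~2.6.8]{Mart} after converting conventions (Martinet's successive minima and determinant are the squares of those used here), which is exactly the citation you invoke for the substantive case. Your geometric-mean interpolation from $k=n$ down to general $k\le n$ is valid but strictly unnecessary, since the cited theorem already covers all $k$, and your caveat that Minkowski's convex-body theorem applied to the unit ball would only yield the weaker constant $2^n/\mathrm{vol}(B_n)\ge\gamma_n^{n/2}$ is accurate.
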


	For a nonnegative integer $s\le n/2$ and $r = n-2s$ 
	we define the function 
	\begin{equation}
		\label{boi}
		N_s(\vec{u}) := \prod_{i =1}^{r} |u_i| \cdot  \prod_{j=1}^{s} (v_j^2 + w_j^2)
	\end{equation}
	on vectors
	$\vec{u}= (u_{1}, \ldots\,, u_{r}, v_{1},  w_{1}, \ldots\,,
	v_{s}, w_{s}) \in  \R^n$. 
	Set 
	$$
	m_s(\Lambda) := \inf_{\vec{x}\in \Lambda \setminus\{0\}} 
	N_s(\vec{x})
	$$
	and
	\begin{equation}\label{vienas1}
		M_s(\Lambda) := \sup_{\vec{u} \in \R^n} \inf_{\vec{x}\in \Lambda} 
		N_s(\vec{u} - \vec{x}), 
	\end{equation}
	which are called the \textit{homogeneous minimum} and the \textit{inhomogeneous minimum} of $\Lambda$ respectively. 
	
	The following lemma follows closely the proof of~\cite[Lemma~2.1]{DSS}. 
	We remark that the pair of parameters $(r,s)$  in our notation corresponds to $(s,t)$ in the notation of~\cite{DSS}.
	
	\begin{lemma} \label{lem:vecx}
		For any nonnegative integer $s \le n/2$ and any nonzero vector $\vec{x} \in \R^n$, we have 
		$$
		\|\vec{x}\|   \ge 2^{-s/n} n^{1/2} N_s({\vec x})^{1/n}.
		$$
	\end{lemma}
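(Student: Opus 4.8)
The plan is to reduce the statement, after clearing roots, to a single application of the arithmetic--geometric mean (AM--GM) inequality. Writing $\vec{x}=(u_1,\dots,u_r,v_1,w_1,\dots,v_s,w_s)$ with $r=n-2s$, and noting that both sides of the claimed bound are nonnegative, the inequality is equivalent to
$$
\|\vec{x}\|^{2n} \ge 2^{-2s}\,n^{n}\,N_s(\vec{x})^2 .
$$
Since $\|\vec{x}\|^2=\sum_{i=1}^{r}u_i^2+\sum_{j=1}^{s}(v_j^2+w_j^2)$ and $N_s(\vec{x})=\prod_{i=1}^{r}|u_i|\cdot\prod_{j=1}^{s}(v_j^2+w_j^2)$, it suffices to lower bound $\|\vec{x}\|^2$ by a power of $N_s(\vec{x})$, so the whole problem is an inequality between a sum and a product of nonnegative quantities, which is exactly the setting of AM--GM.

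The key idea is \emph{not} to apply AM--GM to the $n$ squared coordinates $u_i^2,v_j^2,w_j^2$ directly: this is lossy, because $N_s$ depends on the combinations $v_j^2+w_j^2$ rather than on $v_j,w_j$ separately. Instead I would apply AM--GM to a carefully chosen collection of exactly $n$ nonnegative numbers, namely the $r$ numbers $u_1^2,\dots,u_r^2$ together with, for each $j$, the two equal numbers $\tfrac12(v_j^2+w_j^2)$ and $\tfrac12(v_j^2+w_j^2)$. This produces $r+2s=n$ terms whose sum is exactly $\|\vec{x}\|^2$, while their product equals
$$
\prod_{i=1}^{r} u_i^2 \cdot \prod_{j=1}^{s}\Big(\tfrac12(v_j^2+w_j^2)\Big)^2 = 2^{-2s}\,\prod_{i=1}^{r} u_i^2 \cdot \prod_{j=1}^{s}(v_j^2+w_j^2)^2 = 2^{-2s}\,N_s(\vec{x})^2 .
$$

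Applying AM--GM to these $n$ terms then gives $\|\vec{x}\|^2/n \ge \big(2^{-2s}N_s(\vec{x})^2\big)^{1/n}$; raising to the $n$-th power and rearranging yields the displayed inequality above, and hence the lemma after extracting $2n$-th roots. The degenerate cases, where some coordinate and therefore $N_s(\vec{x})$ vanishes, are immediate since the left-hand side is strictly positive for $\vec{x}\neq 0$.

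The only genuine obstacle is choosing the right $n$ terms for AM--GM: the factor $2^{-s/n}$ in the statement is accounted for precisely by splitting each complex contribution $v_j^2+w_j^2$ into two equal halves, which is what makes the geometric mean reproduce $N_s(\vec{x})$ up to the power $2$ and the constant $2^{-2s}$. As a sanity check, equality in AM--GM forces all $n$ terms to coincide, i.e.\ $u_i^2=\tfrac12(v_j^2+w_j^2)$ for all $i,j$; at such a point one has $v_j^2+w_j^2=2u_i^2$ and the bound is attained, confirming that the constant $2^{-s/n}$ is sharp.
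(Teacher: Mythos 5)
Your proof is correct, and it takes a genuinely different route from the paper's. The paper applies AM--GM twice, separately to the real block and the complex block, obtaining $\|\vec{x}\|^2 \ge rX^{2/r} + sY^{1/s}$ with $X=\prod_{i}|x_i|$ and $Y=\prod_j(y_j^2+z_j^2)$, and then runs a one-variable calculus minimization of $F(x)=rx^{2/r}+sA^{1/s}x^{-1/s}$ (with $A=N_s(\vec{x})=XY$) to locate the optimal balance between the two blocks; it must also treat the degenerate cases $r=0$ and $s=0$ separately, since the exponents $2/r$ and $1/s$ are undefined there. You instead absorb the whole optimization into a single application of AM--GM to a well-chosen list of exactly $n$ terms, splitting each $v_j^2+w_j^2$ into two equal halves $\tfrac12(v_j^2+w_j^2)$; the sum of your $n$ terms is exactly $\|\vec{x}\|^2$, the product is $2^{-2s}N_s(\vec{x})^2$, and the inequality
$$
\|\vec{x}\|^2 \;\ge\; n\left(2^{-2s}N_s(\vec{x})^2\right)^{1/n} \;=\; 2^{-2s/n}\, n\, N_s(\vec{x})^{2/n}
$$
drops out in one line, with the cases $r=0$ and $s=0$ requiring no separate treatment and the equality analysis of AM--GM confirming sharpness of the constant. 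What the paper's route buys is an explicit view of where the factor $2^{-2s/n}$ arises, namely as the value of an optimal trade-off between the real and complex contributions (the minimizer $x_0=2^{-rs/(r+2s)}A^{r/(r+2s)}$); your splitting trick encodes that same optimal weighting implicitly in the choice of the $n$ terms, which makes the argument shorter, entirely elementary (no derivatives), and uniform in $r$ and $s$.
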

	
	\begin{proof}
		Set $r = n-2s$, and write the nonzero vector $\vec{x} \in \R^n$ as
		$$
		\vec{x}= (x_{1}, \ldots\,, x_{r}, y_{1}, z_{1}, \ldots\,,y_{s}, z_{s}).
		$$  
		Then,
		$$
		\|\vec{x}\|^2= \sum^{r}_{i=1} x^{2}_{i} +
		\sum^{s}_{j=1} (y^{2}_{j} + z^{2}_{j}). 
		$$
		Now, let us write 
		$$
		N_s(\vec{x}) = XY,
		$$
		where 
		$$
		X:=  \prod_{i =1}^{r} |x_i| \mand Y:=  \prod_{j =1}^{s} (|y_j|^2 + 
		|z_j|^2).
		$$
		
		Assume that $XY \ne 0$, since otherwise the result is trivial.
		For $rs> 0$,  by   the inequality  between the arithmetic and geometric means and~\eqref{boi}, 
		we obtain 
		$$
		\|\vec{x}\|^2 \geq r X^{2/r}+ sY^{1/s} = rX^{2/r}+ sN_s(\vec{x}) ^{1/s}X^{-1/s}.
		$$
		For $A>0$, the minimum of the function
		$F(x)=rx^{2/r}+sA^{1/s} x^{-1/s}$  on the half line $x > 0$ is attained for 
		$x_0:= 2^{-rs/(r+2s)} A^{r/(r+2s)}$, 
		which is the root of the equation $F'(x)=2x^{2/r-1} - A^{1/s} x^{-1/s-1}=0$. By $n=r+2s$,
		the minimum is equal to 
		\begin{align*}
			F(x_0) &=rx_0^{2/r}+sA^{1/s}x_0^{-1/s} =  (r+2s) x_0^{2/r} \\
			& = (r+2s) 2^{-2s/(r+2s)} A^{2/(r+2s)}   = n2^{-2s/n} A^{2/n}. 
		\end{align*}
		Now,
		using this inequality with $A = N_s({\vec x})$,
		we derive that
		\begin{equation}
			\label{eq:Bound st>0}
			\|\vec{x}\|^2   \ge n2^{-2s/n} N_s({\vec x})^{2/n}.
		\end{equation}
		
		One also verifies that for $r = 0$  we have $s=n/2$, $Y=N_s(\vec{x})$ and thus 
		$$
		\|\vec{x}\|^2 \geq sY^{1/s} =  sN_s(\vec{x}) ^{1/s}=\frac{n}{2} N_s({\vec x})^{2/n}; 
		$$
		while for $s = 0$ we have  $r=n$, $X=N_s(\vec{x})$ and hence
		$$
		\|\vec{x}\|^2 \geq r X^{2/r}  =  r N_s(\vec{x})^{2/r}= n N_s({\vec x})^{2/n}.
		$$
		Therefore, the inequality~\eqref{eq:Bound st>0} also holds for $rs = 0$. 
		
		Finally, from~\eqref{eq:Bound st>0}, by taking the square roots
		of both sides, we get the desired inequality about the length $\|\vec{x}\|$. 
	\end{proof}

	By Lemma~\ref{lem:vecx}, we get the following improvement of~\cite[Lemma~3.4]{B-FBJ}. More precisely, instead of the factor $2^{n/2}$ in~\cite[Lemma~3.4]{B-FBJ}, it contains the factor $2^s\le 2^{n/2}$
	with a strict inequality for $r > 0$.  
	
	\begin{lemma}
		\label{lem:m and mu1} 
		For any nonnegative integer $s \le n/2$, we have 
		$$
		m_s(\Lambda) \le 2^{s} n^{-n/2} \mu_1(\Lambda)^n.
		$$
	\end{lemma}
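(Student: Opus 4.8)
The plan is to read off the desired bound directly from Lemma~\ref{lem:vecx} by testing it on a shortest nonzero lattice vector. Since $m_s(\Lambda)$ is defined as an infimum of $N_s$ over all nonzero vectors $\vec{x}\in\Lambda$, every single nonzero lattice vector furnishes an upper bound for $m_s(\Lambda)$; hence it suffices to produce one convenient choice and estimate its $N_s$-value.

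First I would use the discreteness of $\Lambda$ to fix a nonzero vector $\vec{x}_0 \in \Lambda$ whose length realizes the first successive minimum, that is, $\|\vec{x}_0\| = \mu_1(\Lambda)$. Applying Lemma~\ref{lem:vecx} to $\vec{x}_0$ yields
$$
\mu_1(\Lambda) = \|\vec{x}_0\| \ge 2^{-s/n} n^{1/2} N_s(\vec{x}_0)^{1/n}.
$$
Raising both sides to the $n$-th power and rearranging to isolate $N_s(\vec{x}_0)$ gives
$$
N_s(\vec{x}_0) \le 2^{s} n^{-n/2} \mu_1(\Lambda)^n.
$$

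Finally, since $\vec{x}_0$ is a nonzero element of $\Lambda$, the definition of the homogeneous minimum gives $m_s(\Lambda) \le N_s(\vec{x}_0)$, and combining this with the previous display yields the claimed inequality. I expect no genuine obstacle here: the entire analytic content sits in Lemma~\ref{lem:vecx}, and the only point worth verifying is that a shortest nonzero vector attaining $\mu_1(\Lambda)$ actually exists, which is immediate from the fact that $\Lambda$ is a discrete subgroup of $\R^n$.
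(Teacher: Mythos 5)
Your proof is correct and follows essentially the same route as the paper: both derive the bound by combining Lemma~\ref{lem:vecx} with the definitions of $\mu_1(\Lambda)$ and $m_s(\Lambda)$ and rearranging. The only cosmetic difference is that you evaluate at a vector attaining $\mu_1(\Lambda)$ (whose existence you rightly note follows from discreteness), whereas the paper bounds $N_s(\vec{x})^{1/n}$ below by $m_s(\Lambda)^{1/n}$ for every nonzero $\vec{x}\in\Lambda$ and then passes to the infimum of $\|\vec{x}\|$, thereby not needing the minimum to be attained.
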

	
	\begin{proof}
		Note that the inequality in Lemma~\ref{lem:vecx} holds for any vector $\vec{x}\in \Lambda \setminus\{0\}$. 
		This, together with the definitions of $\mu_1(\Lambda)$ and $m_s(\Lambda)$, yields
		$$
		\mu_1(\Lambda) \ge 2^{-s/n} n^{1/2} m_s(\Lambda)^{1/n},
		$$
		which implies the assertion of the lemma. 
	\end{proof}
	
	Using Lemma~\ref{lem:vecx},  we also obtain the following improvement of~\cite[Lemma~3.5]{B-FBJ}.  
	More precisely, instead of the factor $2^{-n/2}$ as  in~\cite[Lemma~3.5]{B-FBJ}, 
	it contains the factor $2^{s-n} \le 2^{-n/2}$ with a strict inequality for $r > 0$. 
	
	\begin{lemma}
		\label{lem:M and mun} 
		For any nonnegative integer $s \le n/2$, we have 
		$$
		M_s(\Lambda) \le 2^{s-n} \mu_n(\Lambda)^n.
		$$
	\end{lemma}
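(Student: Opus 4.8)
The plan is to convert the length estimate of Lemma~\ref{lem:vecx} into an upper bound on $N_s$ and then to control how far an arbitrary point of $\R^n$ can lie from $\Lambda$. Rearranging Lemma~\ref{lem:vecx}, for every $\vec{y} \in \R^n$ we have
$$
N_s(\vec{y}) \le 2^{s} n^{-n/2} \|\vec{y}\|^{n}.
$$
Hence, if for a given $\vec{u} \in \R^n$ we can exhibit a lattice vector $\vec{x} \in \Lambda$ with $\|\vec{u}-\vec{x}\|$ small, we immediately bound the inner infimum in~\eqref{vienas1}. Concretely, it suffices to show that every $\vec{u}\in\R^n$ admits some $\vec{x}\in\Lambda$ with $\|\vec{u}-\vec{x}\| \le \tfrac{1}{2}n^{1/2}\mu_n(\Lambda)$, since substituting this into the displayed inequality gives $N_s(\vec{u}-\vec{x}) \le 2^{s} n^{-n/2}\big(\tfrac12 n^{1/2}\mu_n(\Lambda)\big)^n = 2^{s-n}\mu_n(\Lambda)^n$, and taking the supremum over $\vec{u}$ then yields the claim.

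To produce such an $\vec{x}$, I would start from linearly independent vectors $\vec{b}_1,\dots,\vec{b}_n \in \Lambda$ realizing the successive minima, so that $\|\vec{b}_k\| = \mu_k(\Lambda)$ for each $k$, and apply Gram--Schmidt orthogonalization to obtain an orthogonal system $\vec{b}_1^{*},\dots,\vec{b}_n^{*}$ with $\|\vec{b}_k^{*}\| \le \|\vec{b}_k\| = \mu_k(\Lambda) \le \mu_n(\Lambda)$. For an arbitrary target $\vec{u}$, a nearest-plane (Babai-type) rounding in this basis produces an integer combination $\vec{x} = \sum_{k=1}^n m_k \vec{b}_k$, lying in $\Lambda$, for which the residual has the form $\vec{u}-\vec{x} = \sum_{k=1}^n \eta_k \vec{b}_k^{*}$ with $|\eta_k| \le \tfrac12$. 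Because the $\vec{b}_k^{*}$ are pairwise orthogonal, this gives
$$
\|\vec{u}-\vec{x}\|^2 = \sum_{k=1}^n \eta_k^2 \|\vec{b}_k^{*}\|^2 \le \frac14 \sum_{k=1}^n \mu_k(\Lambda)^2 \le \frac{n}{4}\,\mu_n(\Lambda)^2,
$$
which is exactly the bound $\|\vec{u}-\vec{x}\| \le \tfrac12 n^{1/2}\mu_n(\Lambda)$ needed above.

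The only delicate point is the rounding step: one must check that processing the coordinates from $k=n$ down to $k=1$, at each stage rounding the current Gram--Schmidt coefficient to the nearest integer and subtracting the corresponding multiple of $\vec{b}_k$, indeed leaves a residual whose $\vec{b}_k^{*}$-coefficients all lie in $[-\tfrac12,\tfrac12]$, while keeping $\vec{x}$ inside the sublattice generated by $\vec{b}_1,\dots,\vec{b}_n$ (hence inside $\Lambda$). Everything else is a direct substitution. I expect this verification to be the main obstacle, although it is entirely standard. It is worth noting where the improvement over~\cite[Lemma~3.5]{B-FBJ} originates: the length estimate $\|\vec{u}-\vec{x}\| \le \tfrac12 n^{1/2}\mu_n(\Lambda)$ is the same in both arguments, and the sharpening from the factor $2^{-n/2}$ to $2^{s-n}$ comes entirely from the improved constant $2^{s}$ (in place of $2^{n/2}$) supplied by Lemma~\ref{lem:vecx}.
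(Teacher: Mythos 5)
Your proposal is correct and follows essentially the same route as the paper: both arguments rearrange Lemma~\ref{lem:vecx} into $N_s(\vec{u}-\vec{x}) \le 2^{s} n^{-n/2} \|\vec{u}-\vec{x}\|^{n}$ and then invoke the covering estimate $\|\vec{u}-\vec{x}\| \le \tfrac{\sqrt{n}}{2}\mu_n(\Lambda)$, which the paper simply quotes from~\cite[Lemma~3.5]{B-FBJ} while you reprove it via the standard Gram--Schmidt/nearest-plane rounding. Your closing remark is also accurate: the gain from $2^{-n/2}$ to $2^{s-n}$ comes entirely from the factor $2^{s}$ in Lemma~\ref{lem:vecx}, not from any change in the covering bound.
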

	
	\begin{proof}
		It is shown in~\cite[Lemma~3.5]{B-FBJ}  that for any 
		${\vec u} \in \R^n$ there is a vector ${\vec x} \in \Lambda$ such that
		$$
		\|{\vec u}-{\vec x}\| \le \frac{\sqrt{n}}{2} \mu_n(\Lambda).
		$$  
		Now, using Lemma~\ref{lem:vecx} (with ${\vec x}$ replaced by ${\vec u} -{\vec x}$) we find that 
		\begin{align*}
			N_s({\vec u}-{\vec x}) \le 2^s n^{-n/2} \|\vec{u} - \vec{x}\|^n 
			& \le 2^s n^{-n/2} \cdot \left( \frac{\sqrt{n}}{2} \mu_n(\Lambda) \right)^n \\
			& = 2^{s-n} \mu_n(\Lambda)^n. 
		\end{align*}
		Finally, since the above vector $\vec{u}$ is arbitrary, the desired result follows by~\eqref{vienas1}. 
	\end{proof}
	
	Now, we restate some constructions from~\cite[Section~5]{B-FBJ}. 
	(We remark that the definition of the group $G$ in~\cite[Section~5]{B-FBJ} has been corrected in its latest 2023 version in arXiv.)
	In the sequel, we use $G_{r+s}$ to denote this group. 
	
	Using the isomorphism $\R^n \cong \R^r \oplus \C^s$, we can consider  $\Lambda$ 
	as a lattice in  $\R^r \oplus \C^s$. 
	Let $G_{r+s}$ be the group of diagonal matrices 
	$g = \diag(g_1, \ldots, g_{r+s})$ with positive diagonal 
	entries $g_1, \ldots, g_{r+s}$ such that 
	$$
	g_1 \cdots g_s (g_{s+1}\cdots g_{r+s})^2 = 1. 
	$$ 
	We can consider the natural action of the elements 
	of the group $G_{r+s}$ on the elements of $\Lambda$.  
	
	We recall that~\cite[Theorem~5.1]{B-FBJ} asserts that if $G_{r+s}\Lambda$ is compact, 
	then for any positive integer $a \le r+s$ we have 
	\begin{equation} \label{eq:mM}
		m_s(\Lambda)^s M_s(\Lambda)^a \le \left(2^{s-a} n^{-s}  \gamma_n^{s+a} \right)^{n/2} \det(\Lambda)^{s+a}, 
	\end{equation}
	which can be equivalently written as 
	$$
	m_s(\Lambda)^s M_s(\Lambda)^a \le 2^{s^2+as-an+r(a+s)/2} n^{-ns/2}  \gamma_n^{n(s+a)/2} \det(\Lambda)^{s+a},
	$$
	since $n = r+2s$.
	We refer to~\cite[Section~4.5]{B-FBJ} for the definition of compactness.
	
	We now present the following improvement of~\eqref{eq:mM},
	which rests on an idea of McMullen~\cite{McM}, which we borrow without any changes from 
	the argument of~\cite{B-FBJ} combined with Lemmas~\ref{lem:m and mu1} and~\ref{lem:M and mun}.
	
	\begin{lemma}
		\label{lem:m and M} Assume that the set $G_{r+s}\Lambda$ is compact.
		Then, for any positive integer $a \le r+s$, we have
		$$
		m_s(\Lambda)^s M_s(\Lambda)^a \le 2^{s^2+as-an} n^{-ns/2}  \gamma_n^{n(s+a)/2} \det(\Lambda)^{s+a}. 
		$$
	\end{lemma}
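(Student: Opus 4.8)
The plan is to run the McMullen-type argument of \cite[Theorem~5.1]{B-FBJ} essentially verbatim, changing only the two auxiliary estimates that feed into it. The point of departure is that the norm form $N_s$ from \eqref{boi} and the determinant are both invariant under the action of $G_{r+s}$: for $g = \diag(g_1,\ldots,g_{r+s}) \in G_{r+s}$ the defining relation is exactly the condition that the scalar by which $N_s$ and the volume form get multiplied under $g$ equals $1$, so that $N_s(g\vec{x}) = N_s(\vec{x})$ for all $\vec{x}\in\R^n$ and $\det(g\Lambda) = \det(\Lambda)$. Consequently $m_s(g\Lambda) = m_s(\Lambda)$ and $M_s(g\Lambda) = M_s(\Lambda)$ for every $g \in G_{r+s}$, whereas the successive minima $\mu_1(g\Lambda)$ and $\mu_n(g\Lambda)$ do vary along the orbit; this is precisely what makes it profitable to replace $\Lambda$ by a well-chosen $g\Lambda$.

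First I would apply Lemmas~\ref{lem:m and mu1} and~\ref{lem:M and mun} to $g\Lambda$ rather than to $\Lambda$, obtaining for every $g \in G_{r+s}$
$$
m_s(\Lambda)^s M_s(\Lambda)^a = m_s(g\Lambda)^s M_s(g\Lambda)^a \le 2^{s^2} n^{-ns/2} \mu_1(g\Lambda)^{ns} \cdot 2^{(s-n)a} \mu_n(g\Lambda)^{na}.
$$
The product of the two powers of $2$ here is $2^{s^2+as-an}$, and the factor $n^{-ns/2}$ is already in place, so both match the statement. It therefore remains to produce a $g$ for which $\mu_1(g\Lambda)^{ns}\mu_n(g\Lambda)^{na} \le \gamma_n^{n(s+a)/2}\det(\Lambda)^{s+a}$, equivalently $\mu_1(g\Lambda)^{s}\mu_n(g\Lambda)^{a} \le \gamma_n^{(s+a)/2}\det(\Lambda)^{(s+a)/n}$.

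This last inequality, together with the selection of $g$, is exactly the content of McMullen's idea as used in \cite{B-FBJ} and based on \cite{McM}: the hypothesis that $G_{r+s}\Lambda$ is compact guarantees (via Mahler's criterion) that $\mu_n(g\Lambda)$ stays bounded over the orbit and that an optimising $g$ can be selected, after which Minkowski's theorem in the form of Lemma~\ref{lem:Mink} bounds the relevant product of successive minima of $g\Lambda$ by the right power of $\gamma_n$ and $\det(g\Lambda)=\det(\Lambda)$. Since this portion of the argument involves neither $m_s$, $M_s$, nor the constants $2^{s}$ and $2^{s-n}$, it is imported without change from the proof of \eqref{eq:mM}; the only difference from \cite{B-FBJ} is that at the two places where their argument invokes \cite[Lemma~3.4]{B-FBJ} and \cite[Lemma~3.5]{B-FBJ} (with the constants $2^{n/2}$ and $2^{-n/2}$) we instead invoke our sharper Lemmas~\ref{lem:m and mu1} and~\ref{lem:M and mun} (with $2^{s}$ and $2^{s-n}$).

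I expect the real work to be bookkeeping rather than conceptual. The substantive and delicate step is the compactness/selection argument of the preceding paragraph, but because it is literally the one from \cite{B-FBJ} and is untouched by our substitution, the burden here reduces to verifying that the substitution is legitimate at exactly the claimed places and that the exponent of $2$ is recomputed correctly. Carrying this through, the replacement turns $2^{ns/2}$ into $2^{s^2}$ in the $m_s^s$-factor and $2^{-na/2}$ into $2^{(s-n)a}$ in the $M_s^a$-factor, which, since $r = n-2s$, improves the overall constant in \eqref{eq:mM} by precisely the factor $2^{r(s+a)/2}$ and yields the exponent $s^2 + as - an$ in the statement.
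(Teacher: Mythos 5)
Your proposal is correct and takes essentially the same route as the paper: the paper likewise selects, via \cite[Theorem~4.3]{B-FBJ}, an element $g \in G_{r+s}$ with $\mu_{s+1}(g\Lambda)=\cdots=\mu_n(g\Lambda)$, applies Lemma~\ref{lem:Mink} with $k=s+a$ to get $\mu_1(g\Lambda)^s\mu_n(g\Lambda)^a \le \gamma_n^{(s+a)/2}\det(\Lambda)^{(s+a)/n}$, and combines this with Lemmas~\ref{lem:m and mu1} and~\ref{lem:M and mun} together with the $G_{r+s}$-invariance of $m_s$, $M_s$ and $\det$; your version merely applies the two lemmas first and the Minkowski step second, which is the same inequality rearranged. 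Your exponent bookkeeping, including the improvement factor $2^{r(s+a)/2}$ over \eqref{eq:mM}, checks out.
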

	
	\begin{proof}
		Since $G_{r+s}\Lambda$ is compact, using~\cite[Theorem~4.3]{B-FBJ}, there exists $g \in G_{r+s}$ for which
		$$
		\mu_{s+1}(g\Lambda)=\dots=\mu_n(g \Lambda).
		$$ 
		By Minkowski's theorem (Lemma~\ref{lem:Mink}), for any positive integer $k \leq n$, we have 
		$$
		\mu_1(g\Lambda) \cdots \mu_{k}(g \Lambda) \le \gamma_n^{k/2} \det(g \Lambda)^{k/n}.
		$$
		Therefore, selecting $k=s+a$, we find that 
		\begin{align*}
			\gamma_n^{(s+a)/2} \det(g \Lambda)^{(s+a)/n} &\ge
			\mu_1(g \Lambda) \cdots \mu_{s+a}(g \Lambda) \\&= \mu_1(g \Lambda)
			\cdots \mu_{s}(g \Lambda) \mu_n(g \Lambda)^a   \ge
			\mu_1(g \Lambda)^s \mu_n(g \Lambda)^a.
		\end{align*}
		In addition, by Lemma~\ref{lem:m and mu1}, we get
		$$
		\mu_1(g \Lambda) \geq 2^{-s/n} n^{1/2} m_s(g \Lambda)^{1/n} = 2^{-s/n} n^{1/2} m_s(\Lambda)^{1/n},
		$$
		while, by Lemma~\ref{lem:M and mun}, we find that 
		$$
		\mu_n(g \Lambda) \geq 2^{1-s/n} M_s(g \Lambda)^{1/n}=  
		2^{1-s/n} M_s(\Lambda)^{1/n}.
		$$
		Finally, since $\det(g\Lambda)=\det(\Lambda)$, combining the above estimates we obtain
		\begin{align*}
			\gamma_n^{(s+a)/2} \det(\Lambda)^{(s+a)/n} 
			& \ge 2^{-s^2/n} n^{s/2} m_s(\Lambda)^{s/n} \cdot 2^{a-as/n} M_s(\Lambda)^{a/n} \\
			& = 2^{(an-s^2-as)/n} n^{s/2} m_s(\Lambda)^{s/n} M_s(\Lambda)^{a/n}.
		\end{align*}
		This yields the desired inequality of the lemma. 
	\end{proof}

	\section{Proof of Theorem~\ref{thm: MKa}}
	
	Noticing the embedding $\psi \, \colon \, \K
	\hookrightarrow \R^{r} \times \C^{s} \cong \R^{n}$, we directly have 
	\begin{equation}  \label{eq:MMs}
		\rM(\K) \le M_s(\Lambda_\K).
	\end{equation}
	So, it suffices to prove the desired upper bound for $M_s(\Lambda_\K)$. 
	
	It is clear that $m_s(\Lambda_{\K})=1$. 
	We also note that the set $G_{r+s}\Lambda_\K$ is compact (see~\cite[Section~5.2]{B-FBJ} and its latest arXiv version for a correct proof). 
	Then, Lemma~\ref{lem:m and M} gives
	\begin{equation}\label{boi1}
		M_s(\Lambda_\K)^a \leq 2^{s^2 + as -an} n^{-ns/2}  \gamma_n^{n(s+a)/2} \det(\Lambda_\K)^{s+a}.
	\end{equation}
	By~\eqref{eq:disc det}, we get 
	$$
	\det (\Lambda_{\K})^{s+a}=2^{-s^2-as}D_{\K}^{(s+a)/2}. 
	$$
	Inserting this expression into \eqref{boi1} we obtain 
	$$
	M_s(\Lambda_\K)^a \leq 2^{-an} n^{-ns/2}  \gamma_n^{n(s+a)/2} D_{\K}^{(s+a)/2},
	$$
	and hence
	$$
	M_s(\Lambda_\K) \leq 2^{-n} n^{\frac{-ns}{2a}}  \gamma_n^{\frac{n(s+a)}{2a}} D_{\K}^{\frac{s+a}{2a}}. 
	$$
	This, together with~\eqref{eq:MMs}, yields the desired result. 
	
	\section{Some numerical calculations}
	\label{sec:calc}
	
	Now, we want to illustrate some improvements of our previous results given by Theorem~\ref{thm: MKa}. 
	
	The exact values of the Hermite constants $\gamma_n$ are known only for $1 \le n \le 8$, see~\cite[Table~14.4.1]{Mart},  and also for $n=24$, see~\cite[Theorem~9.3]{CK}. 
	(There is also a conjecture related to the cases $9 \le n \le 23$, see~\cite{Ma}.)
	In Table~\ref{tab:Hermite} we summarize all the cases established so far. 
	
	\begin{table}  [H]
		\begin{center}
			\begin{tabular}{|c|c|c|c|c|c|c|c|c|c|}
				\hline
				$n$ & $1$ & $2$ & $3$ & $4$ & $5$ & $6$ & $7$ & $8$ & $24$   \\ \hline
				
				$\gamma_n$ & $1$ & $2 \cdot 3^{-1/2}$ & $2^{1/3}$ & $2^{1/2}$ & $2^{3/5}$ & $2 \cdot 3^{-1/6}$ 
				& $2^{6/7}$ & $2$ & $4$ \\ \hline
			\end{tabular}
			%\vspace{3mm}
			\caption{Hermite constants}
			\label{tab:Hermite}
		\end{center}
	\end{table}

	Combining Theorem~\ref{thm: MKa} with Table~\ref{tab:Hermite}, we can obtain some improvements upon previous upper bounds 
	for $\rM(\K)$ when $n$ is small. 
	For example, comparing with the table in~\cite[Section~5.4]{B-FBJ}, we 
	present Table~\ref{tab:MK} with better upper bounds on  $\rM(\K)$.
	
	\begin{table} % [H] 
		\begin{center}
			\begin{tabular}{|c|c|c|c|}
				\hline
				$n$ & $s$ & $a$ & Upper bound for $\rM(\K)$  \\ \hline
				
				$1$ & $0$ & 0   & $2^{-1} D_{\K}^{1/2}=0.5 \cdot D_{\K}^{1/2}$  \\ \hline 
				
				$2$ & $0$ & 0 & $2^{-1} 3^{-1/2}D_{\K}^{1/2}=0.28867 \ldots \cdot D_{\K}^{1/2}$  \\ \hline
				
				$2$ & $1$ & 0 & $6^{-1}D_{\K}=0.16666\ldots \cdot D_{\K}$  \\ \hline
				
				$3$ & $0$ & $1,2$ or $3$ & $2^{-5/2}D_{\K}^{1/2}=0.17677 \ldots \cdot D_{\K}^{1/2}$  \\ \hline

				$3$ & $1$ & $1$ & $2^{-2}3^{-3/2}D_{\K}=0.04811\ldots \cdot D_{\K}$  \\ \hline
				
				$3$ & $1$ & $2$ & $2^{-9/4}3^{-3/4} D_{\K}^{3/4} =0.09222 \ldots \cdot D_{\K}^{3/4}$  \\ \hline
				
				$4$ & $0$ & $1,2,3$ or $4$ & $2^{-3}D_{\K}^{1/2}=0.125\cdot D_{\K}^{1/2}$  \\ \hline
				
				$4$ & $1$ & $1$ & $2^{-6}D_{\K} =0.015625 \cdot D_{\K}$  \\ \hline
				
				$4$ & $1$ & $2$ & $2^{-9/2}D_{\K}^{3/4}= 0.04419\ldots \cdot D_{\K}^{3/4}$  \\ \hline
				
				$4$ & $1$ & $3$ & $2^{-4}D_{\K}^{2/3}=0.0625\cdot D_{\K}^{2/3}$  \\ \hline
				
				$4$ & $2$ & $1$ & $2^{-9} D_{\K}^{3/2} = 0.001953125 \cdot D_{\K}^{3/2}$  \\ \hline
				
				$4$ & $2$ & $2$ & $2^{-6}D_{\K} = 0.015625\cdot D_{\K}$  \\ \hline
				
				$5$ & $0$ & $1,2,3,4$ or $5$ & $2^{-7/2}D_{\K}^{1/2}=0.08838\ldots \cdot D_{\K}^{1/2}$  \\ \hline
				
				$5$ & $1$ & $1$ & $2^{-2}5^{-5/2}D_{\K} = 0.00447\ldots \cdot D_{\K}$  \\ \hline
				
				$5$ & $1$ & $2$ & $2^{-11/4} 5^{-5/4}D_{\K}^{3/4}=0.01988\ldots \cdot D_{\K}^{3/4}$  \\ \hline
				
				$5$ & $1$ & $3$ & $2^{-3} 5^{-5/6}D_{\K}^{2/3}=0.03269\ldots \cdot D_{\K}^{2/3}$  \\ \hline
				
				$5$ & $1$ & $4$ & $2^{-25/8} 5^{-5/8}D_{\K}^{5/8} = 0.04192 \ldots \cdot D_{\K}^{5/8}$  \\ \hline
				
				$5$ & $2$ & $1$ & $2^{-1/2}5^{-5}D_{\K}^{3/2} =0.00022\ldots \cdot D_{\K}^{3/2}$  \\ \hline
				
				$5$ & $2$ & $2$ & $2^{-2}5^{-5/2}D_{\K}= 0.00447\ldots \cdot D_{\K}$  \\ \hline
				
				$5$ & $2$ & $3$ & $2^{-5/2}5^{-5/3}D_{\K}^{5/6}= 0.01209\ldots \cdot D_{\K}^{5/6}$  \\ \hline
			\end{tabular}
			\vspace{3mm}
			\caption{Upper bound for $\rM(\K)$ ($1 \le n \le 5$)}
			\label{tab:MK}
		\end{center}
	\end{table}  
	
	So far, for an arbitrary number field $\K$ of degree $n$ (without any conditions on its signature), 
	the best upper bound for $\rM(\K)$ is 
	\begin{equation} \label{eq:B-F}
		\rM(\K) \le 2^{-n} D_\K,
	\end{equation}
	which has been proved by Bayer-Fluckiger~\cite{B-F}. 
	
	For a number field $\K$ of degree $n$ and of signature $(r,s)$, 
	if $s \ge 1$, using Theorem~\ref{thm: MKa} and letting $a=s$ we obtain 
	\begin{equation}   \label{eq:a=s}
		\rM(\K) \le 2^{-n}n^{-n/2}\gamma_n^n D_\K, 
	\end{equation}
	which is better than the upper bound in~\eqref{eq:B-F} when $\gamma_n < \sqrt{n}$. 
	
	Recall that currently the strongest upper bound for $\gamma_n$ is 
	\begin{equation}  \label{eq:Blich}
		\gamma_n \le \frac{2}{\pi} \cdot \Gamma(2+n/2)^{2/n}, 
	\end{equation}
	where $\Gamma$ is the gamma function, is due to Blichfeldt~\cite{Blich}. 
	
	Now, by direct computation (for example, using Pari/Gp), we find that the right-hand side of~\eqref{eq:Blich} is less than $\sqrt{n}$ 
	for $n$ in the range $2 \le n \le 43$. Thus, the bound in~\eqref{eq:a=s} is better than the bound in~\eqref{eq:B-F} for that $n$.
	
	\section*{Acknowledgements} 
	
	The authors would like to thank Eva Bayer-Fluckiger, Martino Bo\-rello and  Peter Jossen for valuable discussions and 
	especially for updating their paper~\cite{B-FBJ}.  
	
	During the preparation of this  work, 
	M.~Sha was supported in part by the Guangdong Basic and Applied Basic Research Foundation, Grant 2022A1515012032, 
	and 
	I.~E.~Shparlinski   by the Australian Research Council, Grants DP230100530 and  DP230100534.

\end{document}